\documentclass[11pt]{article}

\usepackage{amsmath, amssymb, amsthm}
\usepackage{geometry}
\usepackage{authblk}
\usepackage{setspace}
\newtheorem{lemma}{Lemma}
\newtheorem{theorem}{Theorem}
\newtheorem{proposition}{Proposition}
\newtheorem{definition}{Definition}
\usepackage{comment}
\usepackage{color}
\usepackage{amsmath}
\usepackage{cleveref}
\title{\bf Penalisation of Two-Dimensional  Brownian Motion}

\author[1]{Joseph Najnudel}
\author[1]{Thammadol Tansrivorarat}
\affil[1]{School of Mathematics, University of Bristol,  UK}
\date{\today} 

\begin{document}

\maketitle

\begin{abstract}
We study a penalisation problem for two-dimensional Brownian motion. Starting from the Wiener measure, we
consider a family of probability measures obtained by weighting paths by a nonnegative functional $F_t$ depending on $t \geq 0$, $F_t$ being measurable with respect to the $\sigma$-algebra generated by the path up to time $t$.   
Under suitable assumptions on the penalisation process, we establish the weak convergence of these measures when $t \rightarrow \infty$. 
The limiting law is identified explicitly in terms of a  $\sigma-$finite measure $\mathbf{W}^{(2)}$, which admits a path
decomposition involving the last hitting time of a circle. This decomposition plays a central role in the analysis
and yields a martingale representation of the limiting measure.
where ordering and local time techniques are no longer available. The proofs rely on Laplace transform
methods and Tauberian theorems, which replace excursion-theoretic tools and allow a precise identification of
the limiting measure and its structural properties.
\end{abstract}
\smallskip
\medskip
\noindent
\textbf{Keywords:} Brownian motion; penalisation; $\sigma-$finite measures; path decomposition; martingales.

\medskip
\noindent

\section{Introduction}

 Penalisation problems are a classical topic in probability theory, originating from the study of
how a stochastic process is modified when its law is weighted by a suitable functional of the path.

A large number of articles have been written by Roynette, Vallois and Yor on penalisation of one-dimensional Brownian motion, included many examples in \cite{RVY06a, RVY06b, RY08, RVY09}, summarized in \cite{RVY06}
and \cite{RY09Book}.
Brownian penalisation has found numerous applications. In particular, it allows to:
\begin{enumerate}
    \item analyze rare events, such as Brownian paths that spend an atypically long time in a given region or avoid certain sets;
    \item investigate the long-time behavior of Brownian motion under pathwise constraints;
    \item construct new probability measures that encode conditioned or constrained versions of Brownian motion.
\end{enumerate}
Penalisation of multidimensional Brownian motion has also been explored by Roynette, Vallois and Yor in \cite{RVY09ESAIM}. Extending the one-dimensional theory to higher dimensions is nontrivial, as the absence of an ordering and of
one-dimensional local times prevents the direct use of the techniques that play a central role in the RVY
framework. In particular, new methods are required to identify the limiting measures and their structural
properties.

In Najnudel, Roynette and Yor \cite{NRY09}, a large part of the one-dimensional penalisation examples studied before have been unified into a common setting, the limiting distribution being absolutely continuous with respect to a $\sigma$-finite measure $\mathbf{W}$ independent of the weight chosen for penalisation. 
The measure $\mathbf{W}$, which has infinite total mass, is of independent interest, and can be very informally seen as the law of the Brownian motion conditioned to be transient. In \cite{NRY09}, the construction of $\mathbf{W}$ has been extended to the settings of the two-dimensional Brownian motion, the linear one-dimensional diffusions, and the recurrent Markov chains on a countable space. However, the general penalisation result in \cite{NRY09} has so far been proven only in the one-dimensional Brownian setting. The purpose of the present article is to extend it to the two-dimensional setting.

Let $W^{(2)}$ denote the standard Wiener measure defined on the filtered probability space
$\Omega = 
\big(\mathcal{C}(\mathbb{R}_{+}, \mathbb{C}),\,
(X_t)_{t \ge 0},\, (\mathcal{F}_t)_{t \ge 0}, \mathcal{F}_{\infty}\big)$,
where $(X_t)_{t \ge 0}$ is the coordinate process, $(\mathcal{F}_t)_{t \ge 0}$ its natural filtration and $\mathcal{F}_{\infty}$ the $
\sigma$-algebra generated by $X_s$ for all $s \geq 0$. Throughout this paper, expectations with respect to $W^{(2)}$ will be denoted by $W^{(2)}(\cdot)$.\\
Let $(F_t)_{t \ge 0}$ be a nonnegative, $(\mathcal{F}_t)_{t \geq 0}$-adapted process such that
\[
0 < W^{(2)}[F_t] < \infty \quad \text{for all } t \ge 0.
\]
Associated with $(F_t)_{t \geq 0}$, we define a family of probability measures $(\mathbb{Q}_t)_{t \ge 0}$ on
$\Omega$ by
\[
\mathbb{Q}_t(\Gamma)
=
\frac{W^{(2)}\!\left(\mathbf{1}_{
\Gamma} F_t\right)}{W^{(2)}(F_t)},
\qquad \Gamma \in \mathcal{F}_{\infty}.
\]
The central question of the penalisation problem is to understand the asymptotic behavior of
$\mathbb{Q}_t$ as $t \to \infty$, and, when possible, to identify a limiting probability measure
$\mathbb{Q}$ describing the long-time behavior of Brownian motion under this weighted law. 

The main results of the present article extend the one-dimensional penalisation results proven in \cite{NRY09} to the two-dimensional setting, for which 
only the construction of the $\sigma$-finite measure similar to $\mathbf{W}$ has been done.

Our approach relies on a path decomposition at the last exit time
from the unit circle under a $\sigma-$finite universal measure, together with a Laplace transform and Tauberian
analysis. Within this framework, we identify a natural class of nonincreasing penalisation functionals that stabilise
after exiting a bounded region, and we prove the convergence of the associated penalised measures. The
limiting laws are described explicitly in terms of the universal $
\sigma$-finite measure constructed in 
\cite{NRY09}. 

The present article is organized as follows: in Section \ref{setting}, we study some properties of the $\sigma$-finite measure given in \cite{NRY09} and we state our main theorem, which is proven in Section \ref{proof}. In Section \ref{discussion},  we discuss limitations of the present approach and outline possible directions for further research.
\section{Properties of the measure $\mathbf{W}^{(2)}$ and statement of the main theorem}
\label{setting}
The main result of this article involves 
a $\sigma$-finite measure $\mathbf{W}^{(2)}$
on $\Omega$, which can be constructed via the following result, immediately deduced from Theorems 2.1.1. and 2.1.2 of \cite{RVY09}. 
\begin{theorem} \label{definitionW2}
Let $q$ be a nonnegative function from $
\mathbb{C}$ to $\mathbb{R}$, compactly supported and such that its integral on $
\mathbb{C}$ is positive and finite. 
Then, there exists a unique continuous, strictly positive function $\varphi_q$ from $\mathbb{C}$ to $\mathbb{R}$ satisfying the Sturm-Liouville equation $\Delta \varphi_q = q 
\varphi_q$ in the sense of Schwartz distribution, and such that the radial derivative of $\varphi_q$ at $z$ is well-defined for $|z|$ large enough 
and equivalent to $1/(\pi |z|)$ when $|z| \rightarrow \infty$. 
Under the Wiener measure $W^{(2)}$, and for 
$$A_s^{(q)} := \int_0^s q(X_u) du$$
the process 
$$\left( M_s^{(2,q)} = \frac{\varphi_q(X_s)}{
\varphi_q(0)} \exp \left(- \frac{1}{2} A_s^{(q)} \right) 
\right)_{s \geq 0}$$
is a martingale with respect to the filtration $(\mathcal{F}_s)_{s 
\geq 0}$, and there exists a probability measure $W_{\infty}^{(2,q)}$ on 
$\Omega$, such that for all $s \geq 0$, its restriction to the $\sigma$-algebra $\mathcal{F}_s$ has density 
$ M_s^{(2,q)}$ with respect to $W^{(2)}$. 
Moreover, under $ W_{\infty}^{(2,q)}$, $A_{\infty}^{(q)}$ is almost surely finite, and the $\sigma$-finite measure $
\mathbf{W}^{(2)}$ given by 
$$\mathbf{W}^{(2)} = \varphi_q(0) \exp 
\left( \frac{1}{2} A_{\infty}^{(q)} 
\right) 
\cdot W_{\infty}^{(2,q)} $$ 
does not depend on $q$.

\end{theorem}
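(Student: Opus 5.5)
Although the statement is quoted from Theorems 2.1.1 and 2.1.2 of \cite{RVY09}, here is how I would prove it directly. The approach runs in four steps: solve the Sturm--Liouville problem radially, check the martingale property with It\^o's formula, build $W_\infty^{(2,q)}$ by Kolmogorov extension, and prove independence of $q$ by comparing two penalisations on $\mathcal{F}_s$.

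\emph{Existence and uniqueness of $\varphi_q$.} Suppose $q$ is supported in the disc $\{|z|\le R\}$. Outside the disc, $\varphi_q$ must be harmonic. Since its radial derivative behaves like $1/(\pi|z|)$, I would look for $\varphi_q$ of the form $\frac{1}{\pi}\log|z| + c + h(z)$ for $|z|>R$, where $h$ is harmonic and bounded with $h(\infty)=0$. Probabilistically, the natural candidate is
\[
\varphi_q(x) = \lim_{t\to\infty} \frac{\log t}{2\pi}\, W^{(2)}_x\!\left[e^{-\frac12 A_t^{(q)}}\right].
\]
This limit is justified by a Tauberian argument on the Laplace transform in $t$. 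The resolvent $\int_0^\infty e^{-\lambda t}W_x[e^{-A_t/2}]\,dt$ solves $(\lambda - \frac12\Delta + \frac12 q)u = 1$, and as $\lambda\to0$ it is asymptotic to a constant times $\varphi_q(x)/(\lambda\log(1/\lambda))$, using the two-dimensional Green function asymptotics $\frac1\pi\log(1/\sqrt\lambda)$. Positivity and continuity follow from the Feynman--Kac representation. For uniqueness, take the difference $\psi$ of two solutions. It solves $\Delta\psi=q\psi$ and is bounded at infinity, since the $\log$ terms cancel and $h$ is bounded. Then $\psi(X_s)e^{-A_s/2}$ is a bounded martingale, and recurrence forces $A_\infty=\infty$, so $\psi\equiv0$.

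\emph{Martingale property and the measure $W^{(2,q)}_\infty$.} Because $\Delta\varphi_q=q\varphi_q$, It\^o's formula, in the generalized form for $W^{2,p}_{loc}$ functions (Krylov), shows that $M^{(2,q)}$ is a positive local martingale. To upgrade it to a true martingale, I would bound $\varphi_q(x)\le C(1+\log(1+|x|))$, which makes $\sup_{u\le s}M_u$ integrable. Since $M_0=1$, the restricted measures $M_s\cdot W^{(2)}|_{\mathcal{F}_s}$ form a consistent family of probability measures. Kolmogorov (or Parthasarathy) extension on the canonical space then gives $W^{(2,q)}_\infty$. Under this measure $X$ is an $h$-transform of Brownian motion killed at rate $q/2$. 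The radial part far out looks like a $2$-dimensional Bessel process with an $h$-transform by $\log r$, which is transient, so $X$ leaves every compact set. As a result $A^{(q)}_\infty<\infty$ almost surely.

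\emph{Independence of $q$; this is the main obstacle.} Let $q_1,q_2$ be two admissible functions and $\Gamma\in\mathcal{F}_s$. It suffices to show that the measures $\varphi_{q_i}(0)e^{\frac12A^{(q_i)}_\infty}W^{(2,q_i)}_\infty$ agree on sets of the form $\Gamma\cap\{\text{the path after } s \text{ lies in } B\}$. By the Markov property at time $s$, this reduces to showing that
\[
\nu_x := \varphi_{q}(x)\, e^{\frac12 A_\infty^{(q)}}\cdot W^{(2,q)}_{\infty,x}
\]
does not depend on $q$, simultaneously for all starting points $x$. For this I would use the last-exit decomposition from a large disc $D$ containing both supports. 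After its last exit from $\bar D$, the path is the $h$-transform of Brownian motion killed on $\bar D$, with $h$ harmonic and $\sim\frac1\pi\log|z|$. The pre-exit part has density $\varphi_q(X_s)e^{-A_s/2}\cdot e^{A_s/2}/\varphi_q(\cdot)$, and in this density the $q$-dependence cancels. The remaining issue is to match normalizations. I expect the point needing care to be showing that the law of the exit position on $\partial D$ and the total weight depend only on the asymptotic $\frac{1}{\pi}\log|z|$ normalisation, and not on $q$. I would try to settle this by computing $\nu_x(\Gamma\cap\{g_D\le s\})$ through the $\lambda\to0$ Laplace asymptotics above, which are $q$-free at leading order, and then letting $s\to\infty$.
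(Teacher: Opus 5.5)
The paper gives no proof of this statement; it quotes it from \cite{RVY09}. Your direct argument therefore has to stand on its own, and it fails at the decisive step: independence of $\mathbf{W}^{(2)}$ from $q$ is left as something you ``would try''. Your reduction to $\nu_x$ for all starting points is only a restatement of the claim, since $\nu=\nu_0$. The assertion that the $q$-dependence cancels in the pre-exit density is also inaccurate. Take $\Gamma\in\mathcal{F}_s$ and a disc $D=\{|z|<R\}$ containing the supports. On $\{g_D\le s\}$ you do have $A^{(q)}_\infty=A^{(q)}_s$, so the exponentials cancel. But $\{g_D\le s\}\notin\mathcal{F}_s$, so the Markov property leaves
\[
\nu(\Gamma\cap\{g_D\le s\}) = W^{(2)}\bigl(\mathbf{1}_\Gamma\,\mathbf{1}_{|X_s|>R}\,\varphi_q(X_s)\,W^{(2,q)}_{\infty,X_s}(T_{\bar D}=\infty)\bigr),
\]
where subscripts denote starting points and $T_{\bar D}$ is the hitting time of $\bar D$. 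Both $\varphi_q(X_s)$ and the escape probability depend on $q$. Your proposed remedy, leading-order $\lambda\to0$ Laplace asymptotics, cannot produce an exact equality of two measures.

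The missing idea is an exact identity: for $|x|>R$,
\[
\varphi_q(x)\,W^{(2,q)}_{\infty,x}(T_{\bar D}=\infty)=\varphi_q(x)-W^{(2)}_x\bigl[\varphi_q(X_{T_{\bar D}})\bigr]=\frac1\pi\log\frac{|x|}{R}.
\]
\begin{itemize}
\item The first equality is optional stopping for $M^{(2,q)}$, using $A^{(q)}_{T_{\bar D}}=0$ and $T_{\bar D}<\infty$ $W^{(2)}_x$-a.s.
\item For the second, the middle expression is harmonic on $\{|x|>R\}$, vanishes on $C_R$, and equals $\frac1\pi\log|x|+O(1)$. That last fact follows from the radial-derivative condition via the Fourier expansion of a harmonic function outside a disc, which your uniqueness step already needs.
\item Its difference with $\frac1\pi\log(|x|/R)$ is then a bounded exterior harmonic function vanishing on $C_R$, hence zero by recurrence.
\end{itemize}
Consequently $\nu(\Gamma\cap\{g_D\le s\})=\frac1\pi W^{(2)}(\mathbf{1}_\Gamma\log^+(|X_s|/R))$, which is $q$-free; this is exactly the second formula of Proposition \ref{decompositionWWbf}.

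To conclude, use the $\pi$-system $\{\Gamma\cap\{g_D\le s\}: s\ge 0,\ \Gamma\in\mathcal{F}_s\}$. It is closed under intersection, since $\{g_D\le s_1\}=\{g_D\le s_2\}\cap\Lambda$ with $\Lambda\in\mathcal{F}_{s_2}$ for $s_1\le s_2$. Its sets have finite mass, and $\{g_D\le n\}$ increases to $\{g_D<\infty\}$, whose complement is null by transience. You cannot work on $\bigcup_s\mathcal{F}_s$ directly, because $\nu(\Gamma_s)\in\{0,\infty\}$ there.

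Separately, your existence step for $\varphi_q$ is circular. The claimed resolvent asymptotic $\propto\varphi_q(x)/(\lambda\log(1/\lambda))$ presupposes $\varphi_q$. Nothing shows that $\lim_t\frac{\log t}{2\pi}W^{(2)}_x[e^{-A^{(q)}_t/2}]$ exists, solves $\Delta\varphi=q\varphi$, or has the required radial derivative. You need an independent construction, for instance $\lim_{r\to\infty}\frac1\pi\log r\;W^{(2)}_x[e^{-A^{(q)}_{T_{C_r}}/2}]$, controlled by a renewal argument on a fixed circle.
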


The measure $\mathbf{W}^{(2)}$ is $\sigma$-finite but has an infinite total mass. It satisfies the Brownian scaling property: 
\begin{proposition}
For $a > 0$, let $S_a$ be the scaling operator on paths, given by 
$$(S_a(\omega))_t = a \omega_{t/a^2}$$
for all $\omega \in \mathcal{C}(\mathbb{R}_+, \mathbb{C})$. 
Then, the $\sigma$-finite measure $\mathbf{W}^{(2)}$ is invariant by $S_a$. 

\end{proposition}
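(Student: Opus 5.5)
We will prove the invariance by pushing the representation of Theorem \ref{definitionW2} forward under $S_a$, and using that this representation does not depend on the choice of $q$. Fix $q$ as in Theorem \ref{definitionW2} and set $q_a(z) := a^{-2} q(z/a)$. This function is again nonnegative, compactly supported, and has the same (positive, finite) integral, since $\int q_a = a^{-2}\cdot a^2 \int q$. The aim is to show that $S_a$ maps the representation of $\mathbf{W}^{(2)}$ built from $q$ onto the one built from $q_a$. Uniqueness in $q$ then gives $\mathbf{W}^{(2)}\circ S_a^{-1} = \mathbf{W}^{(2)}$.

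First, I would identify $\varphi_{q_a}$. Let $\psi(z) := \varphi_q(z/a)$. Then $\Delta\psi(z) = a^{-2}(\Delta\varphi_q)(z/a) = a^{-2}q(z/a)\varphi_q(z/a) = q_a(z)\psi(z)$, in the distributional sense. Moreover $\psi$ is continuous and strictly positive. Its radial derivative is $a^{-1}\partial_r\varphi_q(z/a) \sim a^{-1}\cdot \frac{1}{\pi|z|/a} = \frac{1}{\pi|z|}$. By uniqueness in Theorem \ref{definitionW2}, $\varphi_{q_a} = \psi$, and in particular $\varphi_{q_a}(0) = \varphi_q(0)$.

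Second, I would check how additive functionals transform. For $\omega' = S_a(\omega)$ we have
$$A^{(q_a)}_{s}(\omega') = \int_0^{s} a^{-2} q(\omega_{u/a^2})\,du = A^{(q)}_{s/a^2}(\omega).$$
The same substitution gives $\varphi_{q_a}(\omega'_s) = \varphi_q(\omega_{s/a^2})$. Hence $M^{(2,q_a)}_s\circ S_a = M^{(2,q)}_{s/a^2}$ and $A^{(q_a)}_\infty\circ S_a = A^{(q)}_\infty$.

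Third, I would show that $W^{(2,q)}_\infty\circ S_a^{-1} = W^{(2,q_a)}_\infty$. Take $\Gamma\in\mathcal{F}_s$. Then $S_a^{-1}(\Gamma)\in\mathcal{F}_{s/a^2}$, so
$$W^{(2,q)}_\infty(S_a^{-1}\Gamma) = W^{(2)}\big(\mathbf 1_{\Gamma}\circ S_a\, M^{(2,q)}_{s/a^2}\big) = W^{(2)}\big((\mathbf 1_\Gamma M^{(2,q_a)}_s)\circ S_a\big) = W^{(2,q_a)}_\infty(\Gamma).$$
The last equality uses the Brownian scaling invariance of $W^{(2)}$. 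The two probability measures therefore agree on $\bigcup_s\mathcal{F}_s$, which is a $\pi$-system generating $\mathcal{F}_\infty$, so they agree on $\mathcal{F}_\infty$.

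Combining the three steps, for any nonnegative measurable $G$ we get
$$\int G\circ S_a\, d\mathbf{W}^{(2)} = \varphi_q(0)\,W^{(2,q)}_\infty\big(e^{\frac12 A^{(q)}_\infty}\,G\circ S_a\big) = \varphi_{q_a}(0)\,W^{(2,q)}_\infty\big((e^{\frac12 A^{(q_a)}_\infty}G)\circ S_a\big).$$
By the third step this equals $\varphi_{q_a}(0)\,W^{(2,q_a)}_\infty\big(e^{\frac12 A^{(q_a)}_\infty}G\big)$. By the $q$-independence in Theorem \ref{definitionW2}, that is $\int G\, d\mathbf{W}^{(2)}$. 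The only delicate point is the normalisation in the first step: the condition on the radial derivative has to be scale-invariant. It is, precisely because $1/(\pi|z|)$ is homogeneous of degree $-1$, and this is what makes $\varphi_{q_a}(0)=\varphi_q(0)$, so that no constant factor appears.
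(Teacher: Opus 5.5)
Your proof is correct and follows essentially the same route as the paper: rescale $q$, use the uniqueness in Theorem \ref{definitionW2} (where the scale-invariant normalisation $1/(\pi|z|)$ forces $\varphi_{q_a}(0)=\varphi_q(0)$), transport the martingale $M^{(2,q)}$ under $S_a$ via Brownian scaling to identify the image of $W^{(2,q)}_\infty$, and conclude by the $q$-independence of $\mathbf{W}^{(2)}$. The differences are cosmetic. Your $q_a(z)=a^{-2}q(z/a)$ is the paper's $q_{1/a}$, so you obtain invariance under $S_a$ directly rather than under $S_a^{-1}$. You also make the extension from $\bigcup_s\mathcal{F}_s$ to $\mathcal{F}_\infty$ explicit through a $\pi$-system argument.
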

\begin{proof}
Let $q$ be a nonnegative function from $
\mathbb{C}$ to $\mathbb{R}$, compactly supported and such that its integral on $
\mathbb{C}$ is positive and finite. 
We define $q_a$ by 
$$q_a(z) := a^2 q (az).$$
The functions $q_a$ satisfy the same general assumptions as $q$. 
The function $\varphi$ from $\mathbb{C}$ to $\mathbb{R}$ given by 
$$\varphi(z) = \varphi_q(az)$$
satisfies, in the sense of Schwartz distributions, 
$$\Delta \varphi (z) = a^2 (\Delta \varphi_q) (az)
= a^2 q(az) \varphi_q(az) = q_a (z) \varphi(z). $$
Moreover, for $|z|$ large enough, the radial derivative of $\varphi$ at $z$ is $a$ times the radial derivative of $\varphi_q$ at $az$, and then it is equivalent to $1/(\pi |z|)$ when $|z| \rightarrow \infty$. 
By uniqueness properties of $\varphi_{q_a}$, we deduce that 
$$\varphi_{q_a} (z) = \varphi(z) =  \varphi_q(az) $$
for all $z \in \mathbb{C}$. 

Hence, 
$$M_{s}^{(2, q_a)} = \frac{\varphi_q(a X_s)}{\varphi_q(0)} 
\exp \left( - \frac{a^2}{2} \int_0^s q(a X_u) du \right)
=  \frac{\varphi_q((S_a(X))_{a^2 s})}{\varphi_q(0)} 
\exp \left( - \frac{1}{2} \int_0^s q((S_a (X))_{a^2 u}) d(a^2 u) \right)
$$
and then 
$$M_{s}^{(2, q_a)} = M_{a^2 s}^{(2,q)} \circ S_a.$$
We deduce, for 
$s \geq 0$ and for a non-negative $\mathcal{F}_s$-measurable random variable $G$, 
$$W^{(2, q_a)} (G) 
= W^{(2)} ( G M_{s}^{(2, q_a)})
=W^{(2)} ( G M_{a^2 s}^{(2, q)} \circ S_a )
= W^{(2)} ( G \circ S_a^{-1})  M_{a^2 s}^{(2, q)} ),$$
where the last equality is due to the Brownian scaling. 
Since $G$ is $\mathcal{F}_s$-measurable, the random variable $G \circ S_a^{-1}$ is measurable with respect to the $\sigma$-algebra generated by 
$X_u \circ S_a^{-1} = a^{-1} X_{u a^2} $ for $u \leq s$, which is 
$\mathcal{F}_{a^2 s}$. Hence, 
$$W^{(2, q_a)} (G) = W^{(2, q)} (G \circ S_a^{-1})$$
for all $s \geq 0$, and for all $\mathcal{F}_s$-measurable, nonnegative random variables $G$, and then for all $\mathcal{F}_{\infty}$-measurable, nonnegative random variables $G$. We deduce
\begin{align*}
\mathbf{W}^{(2)} (G) & = W^{(2, q_a)} \left(G \varphi_{q_a} (0) 
\exp \left(\frac{1}{2} \int_0^{\infty} q_a(X_u) du \right) \right)
\\ & =  W^{(2, q_a)} \left((G \circ S_a^{-1})  \varphi_{q_a} (0) 
\exp \left(\frac{1}{2} \int_0^{\infty} q_a(X_u \circ S_a^{-1}) du \right) \right)
\\ & = W^{(2, q_a)} \left((G \circ S_a^{-1})  \varphi_{q} (0) 
\exp \left(\frac{1}{2} \int_0^{\infty} a^2 q( a (a^{-1} X_{ua^2})) du \right) \right)
 \\ & = W^{(2, q_a)} \left((G \circ S_a^{-1})  \varphi_{q} (0) 
\exp \left(\frac{1}{2} \int_0^{\infty}  q(   X_{s}) ds \right) \right)
\\ & = \mathbf{W}^{(2)} (G \circ S_a^{-1}). 
\end{align*} 

\end{proof}

Another property of $\mathbf{W}^{(2)}$ we use in this article is 
a decomposition involving inverse local times on circles, and a transient stochastic process on the complex plane.  
For $a > 0$, we denote by $C_a$ the circle of center $0$ and radius $a$, and for $t \geq 0$, we define the local time 
$$L_t^{(C_a)} := \underset{\varepsilon \rightarrow 0}{\lim} 
\frac{1}{2 \pi \varepsilon} \int_0^t \mathbf{1}_{a (1 - \varepsilon) \leq |X_s| \leq a(1 + \varepsilon)} ds$$ 
when this limit exists, which occurs almost surely under $W^{(2)}$. 
The inverse local time on $C_a$, which is almost surely right-continuous, is given by 
$$\tau_{\ell}^{(C_a)} = \inf \{t \geq 0, L_t^{(C_a)} > \ell \}.$$
The transient process involved in the decomposition of $\mathbf{W}^{(2)}$
is defined as follows. 

We also denote by \( P_1^{(2, \log)} \) the law of the process \( (R_t)_{ t \geq 0} \) which solves the stochastic differential equation:

\[
R_t = 1 + \beta_t + \int_0^t \frac{ds}{R_s} \left( \frac{1}{2} + \frac{1}{\log R_s} \right)
\]
where $(\beta_t)_{t \geq 0}$ is a one-dimensional Brownian motion starting from 0. 
Here is another description of the process $(R_t)_{t \geq 0}$: we have 

\[
(\log R_t, t \geq 0) \overset{\text{(law)}}{=} (\rho_{H_t}, t \geq 0)
\]
where $(\rho_u)_{u \geq 0}$ is a 3-dimensional Bessel process starting from 0, and 
\begin{equation}H_t := \int_0^t \frac{ds}{R_s^2}. \label{Ht}
\end{equation}
We define $\tilde{P}_1^{(2,\log)}$ as the law of the process 
$(R_t e^{i \alpha_{H_t}})_{t \geq 0}$, where $(R_t)_{t \geq 0}$ follows the probability distribution \( P_1^{(2, \log)} \), 
$(\alpha_t)_{t \geq 0} $ is a Brownian motion starting from $0$ and independent of $(R_t)_{t \geq 0} $ and $H_t$ is given by \eqref{Ht}. For $a > 0$, we define $\tilde{P}_a^{(2,\log)}$ as the image of $\tilde{P}_1^{(2,\log)}$ by $S_a$, i.e. the law of 
$(a X_{t/a^2})_{t \geq 0}$ where $X$ follows the distribution 
$\tilde{P}_1^{(2,\log)}$. 

Finally, for $a, \ell > 0$, we define 
the probability distribution 
$$W^{(2,\tau_{\ell}^{(C_a)})} \circ \tilde{P}_a^{(2,\log)}$$ as the law of the 2-dimensional process $(Y_t)_{t \geq 0}$ satisfying the following properties:

\begin{enumerate}
\item $(Y_t)_{ t \leq \tau_{\ell}^{(C_a)})}$ is a two-dimensional Brownian motion starting from 0 and stopped at \(\tau_{\ell}^{(C_a)}\). 
\item For an independent process $(Z_t)_{t \geq 0}$ following the distribution $\tilde{P}_a^{(2,\log)}$, we have 
$Y_{t +\tau_{\ell}^{(C_a)} } = Z_t e^{i \gamma}$ where $\gamma$ is the argument of $Y_{\tau_{\ell}^{(C_a)}}$. 
\end{enumerate}

The measure $\mathbf{W}^{(2)}$ satisfies the following properties: 
\begin{proposition} \label{decompositionWWbf}
For any $a > 0$, we have 
$$\mathbf{W}^{(2)} = \frac{1}{a^2} \int_0^{\infty} d \ell (W^{(2, \tau_{\ell}^{(C_a)})} \circ \tilde{P}_a^{(2, \log)} ).$$
 Moreover, for all $t \geq 0$ and all bounded, $\mathcal{F}_t$-measurable random variables 
 $G_t$, we have 
$$\mathbf{W}^{(2)} (G_t \mathbf{1}_{g_{C_a} \leq t}) 
= \frac{1}{ \pi} W^{(2)} (G_t \log^+(|X_t|/a))$$
where $g_{C_a}$ is the last hitting time of $C_a$ and $\log^+$ denotes the positive part of the logarithm. 
\end{proposition}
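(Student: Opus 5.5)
We plan to reduce both statements to the case $a=1$ by the scaling invariance of $\mathbf{W}^{(2)}$ proved in the previous proposition, and then to import the $a=1$ decomposition from \cite{NRY09} (equivalently \cite{RVY09}). Under $S_a$ the local time on $C_1$ becomes $a^{-1}$ times the local time on $C_a$, rescaled in time by $a^2$. Indeed, for the path $a X_{t/a^2}$, the occupation integral near $C_a$ up to time $a^2 t$ equals $a^2$ times the occupation integral near $C_1$ of $X$ up to time $t$. Hence $L^{(C_a)}_{a^2t}\circ S_a = a\,L^{(C_1)}_t$, and $\tau^{(C_a)}_{a\ell}\circ S_a = a^2\tau^{(C_1)}_\ell$ (we will recheck the exact power through the normalisation $1/(2\pi\varepsilon)$).

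Consequently, the image of $W^{(2,\tau^{(C_1)}_\ell)}\circ\tilde P_1^{(2,\log)}$ under $S_a$ is $W^{(2,\tau^{(C_a)}_{a\ell})}\circ\tilde P_a^{(2,\log)}$. Here we use that the Brownian piece is scale invariant, that $\tilde P_a^{(2,\log)}$ is by definition the image of $\tilde P_1^{(2,\log)}$, and that the rotation by the argument $\gamma$ commutes with $S_a$. Integrating in $\ell$ and substituting $\ell'=a\ell$ gives $\int_0^\infty d\ell\,(\cdots)_a$ up to a Jacobian factor. Combining this with $\mathbf{W}^{(2)}=\mathbf{W}^{(2)}\circ S_a^{-1}$ yields the first formula, provided the $a=1$ identity holds with the constant $1$. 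The exact constant $1/a^2$ in the statement depends on the local time normalisation. If the bookkeeping produces $1/a$ instead, we will need to recheck the local time convention, since scaling must be consistent. This constant check is the main obstacle to watch.

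For the second formula we again take $a=1$ first. The last passage time $g_{C_1}$ in the decomposition is exactly $\tau^{(C_1)}_\ell$, because the $\tilde P_1^{(2,\log)}$ part has $R>1$ after time $0$ ($\log R$ is a 3-dimensional Bessel process from $0$). Hence
\[
\mathbf{W}^{(2)}(G_t\mathbf{1}_{g\le t})=\int_0^\infty d\ell\, E\big[G_t\mathbf{1}_{\tau_\ell\le t}\big],
\]
where under $E$ the path runs as Brownian motion up to $\tau_\ell$ and then as the $h$-transform of Brownian motion by $h(z)=\log|z|$ (resp.\ $\log^+$). Indeed, $\log R$ being $\mathrm{BES}(3)$ time-changed means that $\tilde P_1^{(2,\log)}$ is the Doob transform of planar Brownian motion started on $C_1$ by the harmonic function $\log|z|$ killed at $C_1$.

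By the Markov property at $\tau_\ell$, the contribution of the event $\{\tau_\ell\le t\}$ is then
\[
W^{(2)}\!\left[G_t\,\mathbf{1}_{\tau_\ell\le t}\,\frac{\log|X_t|}{\partial_r\log|_{C_1}}\,\mathbf{1}_{|X_t|>1 \text{ on } [\tau_\ell,t]}\right],
\]
with the normalising derivative suitably interpreted. The entrance law from the circle is handled as a limit from radius $1+\varepsilon$, giving factor $\log(1+\varepsilon)/\varepsilon\to1$. Integrating over $\ell$ collapses $\int d\ell\,\mathbf{1}_{\tau_\ell\le t}\mathbf{1}_{\text{no return after}}$ to the indicator that the path hits $C_1$ by time $t$ and ends outside the disc. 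The last visit to $C_1$ before $t$ corresponds to a unique $\ell=L_t$ in local time terms, so the $\ell$-integral picks out exactly one point. The result is $c\,W^{(2)}(G_t\log^+|X_t|)$. The hitting condition is automatic on $\{|X_t|>1\}$ since $X_0=0$.

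The constant $c=1/\pi$ comes from the local time normalisation $1/(2\pi\varepsilon)$ together with the radial derivative convention of $\varphi_q\sim\log|z|/\pi$ in Theorem~\ref{definitionW2}. As a consistency check, we will identify it by taking $G_t=\varphi_q$-type test functions, or by matching with Theorem~\ref{definitionW2}: $\mathbf{W}^{(2)}(G_t\mathbf{1}_{g\le t})$ for large $|X_t|$ should equal $W^{(2)}(G_t\,\varphi_q(X_t)e^{-A_t/2}\,\cdots)$, and $\varphi_q(z)\sim\log|z|/\pi$. The general $a$ case then follows by scaling, since $\log^+(|X_t|/a)$ is exactly the image of $\log^+|X_{t/a^2}|$.
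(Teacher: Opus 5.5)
Your route for the first identity is the paper's: transport the $a=1$ decomposition of \cite{NRY09} by $S_a$ and use the $S_a$-invariance of $\mathbf{W}^{(2)}$. However, the one computation that fixes the constant is wrong, and you leave the resulting mismatch unresolved. In the paper's definition the band is $a(1-\varepsilon)\le|X_s|\le a(1+\varepsilon)$, whose width is proportional to $a$. So $S_a$ carries the $\varepsilon$-band of $C_1$ onto the $\varepsilon$-band of $C_a$ with the same normaliser $1/(2\pi\varepsilon)$, and only the time change contributes: $L^{(C_a)}_t\circ S_a=a^2L^{(C_1)}_{t/a^2}$, hence $\tau^{(C_a)}_{a^2\ell}\circ S_a=a^2\tau^{(C_1)}_\ell$. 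Your factor $a$ is what one would get for a band of fixed absolute width $\{\,|\,|X_s|-a\,|\le\varepsilon\}$, which is not the definition used here. With the correct scaling, $S_a$ maps $W^{(2,\tau^{(C_1)}_\ell)}\circ\tilde P_1^{(2,\log)}$ to $W^{(2,\tau^{(C_a)}_{a^2\ell})}\circ\tilde P_a^{(2,\log)}$, and the substitution $\ell'=a^2\ell$ gives exactly $1/a^2$. With your scaling the same bookkeeping produces $1/a$, which is false for $a\neq 1$.

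For the second identity, the paper imports the case $a=1$ from Theorem 2.2.2 of \cite{NRY09} as well. It then transfers it via $\mathbf{1}_{g_{C_a}\le t}\circ S_a=\mathbf{1}_{g_{C_1}\le t/a^2}$ and Brownian scaling; your final sentence matches this. Your own derivation of the case $a=1$ fails as written. For fixed $\ell$, $L^{(C_1)}$ increases immediately after $\tau^{(C_1)}_\ell$, so $X$ revisits $C_1$ in every interval $(\tau^{(C_1)}_\ell,\tau^{(C_1)}_\ell+h)$. Hence the event $\{|X_u|>1 \text{ on } (\tau^{(C_1)}_\ell,t]\}$ is $W^{(2)}$-null, your Markov-property expression vanishes, and integrating it in $\ell$ gives $0$. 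A genuine last-exit (excursion or L\'evy system) formula is needed, and the constant must then be computed rather than asserted.

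The constant is delicate. With the two-sided band and normaliser $1/(2\pi\varepsilon)$, Tanaka's formula gives $W^{(2)}(\log^+|X_t|)=\frac{\pi}{2}W^{(2)}(L^{(C_1)}_t)$, so going from the first identity to the second this way would output $2/\pi$, not $1/\pi$. The two constants in the statement fit together only for a local time half as large, e.g. with the one-sided band $1\le|X_s|\le 1+\varepsilon$; the scaling factor $1/a^2$ is unaffected.

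What you list as a mere consistency check is in fact a complete proof for every $a$ at once, with no local times. Take $q$ radial with support in $\{|z|\le a\}$, so that $\varphi_q(z)=\frac{1}{\pi}\log|z|+c$ for $|z|\ge a$. On $\{g_{C_a}\le t\}$ one has $A^{(q)}_\infty=A^{(q)}_t$. Optional stopping gives $W^{(2,q)}_\infty(g_{C_a}\le t\mid\mathcal{F}_t)=\log^+(|X_t|/a)/(\pi\varphi_q(X_t))$. Multiplying by $\varphi_q(0)e^{A^{(q)}_t/2}$ and by the density $M^{(2,q)}_t$ yields exactly $\frac{1}{\pi}W^{(2)}(G_t\log^+(|X_t|/a))$.
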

\begin{proof}
For $a = 1$, this result is given in Theorem 2.2.2. of \cite{NRY09}. 
From the definition of the local time, applying $S_a$ gives, for $a > 0$, 
$t \geq 0$, 
$$L_t^{(C_a)} \circ S_a  = a^2 L^{(C_1)}_{t/a^2}$$
when the local times are well-defined, and 
$$\tau_{\ell}^{(C_a)} \circ S_a = a^2 \tau_{\ell/a^2}^{(C_1)}.$$
If we apply $S_a$ to a path $\omega$ stopped at $\tau_{\ell}^{(C_1)}(\omega)$, 
we obtain a new path $S_a(\omega)$ stopped at $a^2 \tau_{\ell}^{(C_1)} (\omega)$, which is equal to $\tau_{\ell a^2}^{(C_a)} (S_a(\omega))$. 
Since $W^{(2)}$ is invariant by $S_a$, the image of $W_0^{(2, \tau_{\ell}^{(C_1)})}$ by $S_a$
is $W_0^{(2, \tau_{\ell a^2}^{(C_a)})}$. 
On the other hand, by definition, the image of $\widetilde{P}_1^{(2, \log)}$ by 
$S_a$ is $\widetilde{P}_a^{(2, \log)}$. 
From the case $a = 1$, we deduce that the image of $\mathbf{W}^{(2)}$ by 
$S_a$ is 
$$\int_0^{\infty} d \ell (W_0^{(2, \tau_{\ell a^2}^{(C_a)})} \circ \widetilde{P}_a^{(2, \log)} ) = 
\frac{1}{a^2}\int_0^{\infty} d \ell (W_0^{(2, \tau_{\ell}^{(C_a)})} \circ \widetilde{P}_a^{(2, \log)} ).
$$
Since we have proven the invariance of $\mathbf{W}^{(2)}$ by $S_a$, we get the first part of the proposition. The invariance of $\mathbf{W}^{(2)}$ also implies, for a bounded, $\mathcal{F}_t$-measurable variable $G_t$, 
$$\mathbf{W}^{(2)} (G_t \mathbf{1}_{g_{C_a} \leq t}) 
= \mathbf{W}^{(2)} ((G_t \circ S_a)( \mathbf{1}_{g_{C_a} \leq t} \circ S_a)) = \mathbf{W}^{(2)} ((G_t \circ S_a)  \mathbf{1}_{g_{C_1} \leq t/a^2}). $$
Now, $G_t \circ S_a$ is in $\mathcal{F}_{t/a^2}$, and then the second part of the proposition, already proven for $a = 1$, gives 
$$\mathbf{W}^{(2)} (G_t \mathbf{1}_{g_{C_a} \leq t}) 
= \frac{1}{ \pi} W^{(2)} ((G_t \circ S_a) \log^+(|X_{t/a^2}|))
= \frac{1}{ \pi} W^{(2)} ((G_t \circ S_a) \log^+|(X_{t} \circ S_a)/a|)$$
Applying the Brownian scaling completes the proof of the proposition.

\end{proof}
Another result we use here is provided by Theorem 2.4.1. of \cite{NRY09}, generalizing Theorem 1.2.1. of \cite{NRY09} from dimension one to dimension two.  
After some change of notation, we get the following:  
\begin{proposition} \label{MtF}
Let $F$ be an integrable random variable defined on the probability space $(\Omega, 
\mathcal{F}_{\infty}, \mathbf{W}^{(2)})$. There exists a continuous martingale $(M_t(F))_{t \geq 0}$, nonnegative if $F \geq 0$, such that for every $t \geq 0$, and every bounded, $\mathcal{F}_t$-measurable random variable $Y_t$, 
$$\mathbf{W}^{(2)} (F Y_t) = W^{(2)} (M_t^{(2)} (F) Y_t).$$
Moreover, $W^{(2)}$-almost surely, 
$$M_t^{(2)} (F) = \mathbf{W}^{(2)} (G)$$
where for all $\omega \in \Omega$ such that $\omega_0 = 0$, 
$$G ((\omega_u)_{u \geq 0}) 
= F((\omega'_u)_{u \geq 0})$$
where $\omega'_u = X_u$ for $0 \leq u \leq t$, and 
$\omega'_u = X_t + \omega_{u-t}$ for $u \geq t$. 

\end{proposition}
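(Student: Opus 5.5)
The plan follows the one-dimensional argument of \cite{NRY09} (Theorem 1.2.1 there), with the two-dimensional ingredients supplied by Theorem \ref{definitionW2} and Proposition \ref{decompositionWWbf}. The statement has two parts: existence of the martingale $(M_t^{(2)}(F))_{t\geq 0}$ with the duality $\mathbf{W}^{(2)}(FY_t)=W^{(2)}(M_t^{(2)}(F)Y_t)$, and its explicit form through the concatenated functional $G$. I will first treat $F\geq 0$ and obtain the general case by writing $F=F^+-F^-$.

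\textbf{Step 1 (a Markov-type property of $\mathbf{W}^{(2)}$).} Fix $t\geq 0$. For bounded $\mathcal{F}_t$-measurable $Y_t$ and nonnegative $F$, I want to show
$$\mathbf{W}^{(2)}(FY_t)=W^{(2)}\bigl(Y_t\,\Phi_t\bigr),\qquad \Phi_t:=\mathbf{W}^{(2)}(G),$$
where $G$ is the functional in the statement, built from the frozen path $(X_u)_{u\leq t}$. This would come from the representation in Theorem \ref{definitionW2}: $\mathbf{W}^{(2)}=\varphi_q(0)e^{A^{(q)}_\infty/2}\cdot W^{(2,q)}_\infty$.
\begin{itemize}
\item Since $W^{(2,q)}_\infty$ has density $M^{(2,q)}_t$ on $\mathcal{F}_t$, I condition on $\mathcal{F}_t$.
\item I use that the law under $W^{(2,q)}_\infty$ of the future path $(X_{t+u}-X_t)_{u\geq 0}$, given $\mathcal{F}_t$, is the law of a Brownian motion penalised by the shifted weight $q(X_t+\cdot)$. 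This is the Markov property of the $h$-transform with $h=\varphi_q$.
\item The factor $e^{A^{(q)}_t/2}$ cancels against the $e^{-A^{(q)}_t/2}$ in $M^{(2,q)}_t$, and $\varphi_q(0)$ cancels against the normalisation of $M_t^{(2,q)}$.
\item What remains is $W^{(2)}\bigl(Y_t\,\varphi_q(X_t)\,W^{(2,q)}_{\infty,X_t}[F'\,e^{\frac12\int_0^\infty q(X_t+\omega_u)\,du}]\bigr)$, where $F'$ is $F$ evaluated on the concatenated path.
\end{itemize}
The inner quantity must then be identified with $\mathbf{W}^{(2)}(G)$. This uses the translation invariance of the construction: replacing $q$ by $q(X_t+\cdot)$ gives $\varphi_{q(X_t+\cdot)}=\varphi_q(X_t+\cdot)$ by the uniqueness in Theorem \ref{definitionW2}, since the radial-derivative asymptotic $1/(\pi|z|)$ is unaffected by translation. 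The independence of $\mathbf{W}^{(2)}$ from $q$ then does the rest.

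\textbf{Step 2 (martingale property and continuity).} For $F\geq 0$ integrable, Step 1 with $Y_t=1$ gives $W^{(2)}(\Phi_t)=\mathbf{W}^{(2)}(F)<\infty$. For $s\leq t$, applying Step 1 at times $s$ and $t$ to the same $\mathcal{F}_s$-measurable $Y_s$ gives $W^{(2)}(\Phi_tY_s)=\mathbf{W}^{(2)}(FY_s)=W^{(2)}(\Phi_sY_s)$, so $(\Phi_t)$ is a $W^{(2)}$-martingale. Brownian filtrations carry only continuous martingales (martingale representation), so a continuous version exists; I set $M_t^{(2)}(F):=\Phi_t$. Linearity extends everything to integrable signed $F$.

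\textbf{Main obstacle.} The delicate point is the conditioning in Step 1. The functional $F$ may be supported on paths whose last exit from large circles occurs late, and $\mathbf{W}^{(2)}$ is only $\sigma$-finite, so I must ensure that nothing infinite is hidden in the conditional expectation. I would handle this by truncation: first take $F$ bounded and multiplied by $\mathbf{1}_{A^{(q)}_\infty\leq K}$, where the densities are finite and Fubini applies, then let $K\to\infty$ by monotone convergence. A secondary technical point is the measurability of $\Phi_t$ in $(X_u)_{u\leq t}$. I would obtain it through a monotone class argument, starting from product functionals $F=f_1(X_{t_1})\cdots f_n(X_{t_n})$. If the translation-invariance identification proves awkward, the fallback is Proposition \ref{decompositionWWbf}: the decomposition at the last exit from $C_a$ together with the formula $\mathbf{W}^{(2)}(G_t\mathbf{1}_{g_{C_a}\leq t})=\frac1\pi W^{(2)}(G_t\log^+(|X_t|/a))$ checks the identity on events $\{g_{C_a}\leq t\}$, and letting $a\to\infty$ in the appropriate sense finishes it.
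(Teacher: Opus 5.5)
Your argument is correct. It is a genuine proof, whereas the paper gives none: it quotes the statement from Theorem 2.4.1 of \cite{NRY09} after a change of notation.

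You instead derive it from Theorem \ref{definitionW2} alone. Condition $\varphi_q(0)\,F\,Y_t\,e^{A^{(q)}_\infty/2}$ on $\mathcal{F}_t$ under $W^{(2,q)}_\infty$. The prefactor $\varphi_q(0)M^{(2,q)}_t e^{A^{(q)}_t/2}$ collapses to $\varphi_q(X_t)$, and the increments after $t$ follow $W^{(2,q(X_t+\cdot))}_\infty$. The translation covariance $\varphi_{q(x+\cdot)}=\varphi_q(x+\cdot)$ and the $q$-independence of $\mathbf{W}^{(2)}$ then identify the result with $\mathbf{W}^{(2)}(G)$.

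This is the translation analogue of the uniqueness trick the paper already uses for scaling invariance ($\varphi_{q_a}=\varphi_q(a\,\cdot)$). So your route makes the paper self-contained with tools it already has, and it gives existence, the martingale property and the explicit formula in one stroke. The citation only buys brevity.

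Three points deserve a line each.

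First, translation covariance is not literally immediate, because the radial derivative in Theorem \ref{definitionW2} is taken about the origin. Use that $\varphi_q$ is harmonic outside a compact set, so its exterior expansion gives $\varphi_q(w)=c+\frac{1}{\pi}\log|w|+O(|w|^{-1})$ and $\nabla\varphi_q(w)=\frac{w}{\pi|w|^2}+O(|w|^{-2})$. Hence $\varphi_q(x+\cdot)$ still has radial derivative equivalent to $1/(\pi|z|)$, and uniqueness applies.

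Second, your truncation by $\mathbf{1}_{A^{(q)}_\infty\le K}$ is unnecessary. $\mathbf{W}^{(2)}$-integrability of $F$ is exactly $W^{(2,q)}_\infty$-integrability of $\varphi_q(0)Fe^{A^{(q)}_\infty/2}$, so ordinary conditioning under a probability measure applies. Integrability of $\Phi_t$ then follows from $W^{(2)}(\Phi_t)=\mathbf{W}^{(2)}(F)$; the ``main obstacle'' you describe does not arise.

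Third, continuity comes from martingale representation for the usual augmentation of the Brownian filtration. This is harmless, since conditioning on $\mathcal{F}_s$, on $\mathcal{F}_{s+}$, or on their completions agrees almost surely. It yields a continuous modification equal to $\mathbf{W}^{(2)}(G)$ almost surely for each fixed $t$, which is what the statement asks.

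(In your fallback, note that $\{g_{C_a}\le t\}$ shrinks as $a\to\infty$. The exhausting limit is $a\downarrow 0$, or a time horizon $s\to\infty$ at fixed $a$.)
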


We have now the main basic ingredients needed in order to prove the main theorem of the article. 
The class of functionals for which the theorem applies are given by the following definition, which is similar to the definition given in \cite{NRY09} in the one-dimensional setting. 

\begin{definition} Let $(F_{t})_{t \geq 0}$ denote an $(\mathcal{F}_t)$-adapted, nonnegative process. We shall say that this process belongs to the class $\mathcal{C}$ if and only if the following conditions hold:

\begin{enumerate}
    \item  $(F_{t})_{t \geq 0}$ is a nonnegative and nonincreasing process. In particular, since $0 \leq F_{t} \leq F_{0}$ and $F_{0}$ is a.s. constant ($\mathcal{F}_0$ is trivial), this process is $W^{(2)}$-almost surely bounded by a constant $C = F_{0}$.
     \item 
    There exists $a>0$ such that for every $t \geq \sigma_a$, where $
    \sigma_a := \sup \{ t \geq 0 :  |X_{t}|\le a \}$, 
    we have $$F_{t} = F_{\sigma_a} = F_{\infty} := \underset{s \rightarrow \infty}{\lim} F_s.$$
    \item We have 
    $$ \mathbf{W}^{(2)}(F_{\infty}) = \mathbf{W}^{(2)}(F_{\sigma_a}) < \infty.$$ 
    \end{enumerate}
\end{definition}
 For example, bounded functions $\varphi$ of the local times at time $t$ on finitely many circles, and of functionals $A_t^{(q)}$ as defined in Theorem \ref{definitionW2}, for finitely many choices of functions $q$,  belong to the class $\mathcal{C}$ if $\varphi$ is bounded and nonincreasing in each of the arguments. Notice that for this fact to be fully rigorous, we can consider a version of the local time which is adapted,  defined everywhere as a value in $[0, \infty]$ and nondecreasing, for example 
$$L_t^{(C_a)} := \underset{\varepsilon \rightarrow 0}{\lim \inf} 
\frac{1}{2 \pi \varepsilon} \int_0^t \mathbf{1}_{a (1 - \varepsilon) \leq |X_s| \leq a(1 + \varepsilon)} ds$$ 
for the local time on the circle of center $0$ and radius $a$. 
With this definition, the main result of the article is the following: 
\begin{theorem}\label{TheMainResult}
  Let $(F_{t})_{t \geq 0}$ be a process belonging to $\mathcal{C}$, 
 such that $\mathbf{W}^{(2)} (F_{\infty}) > 0$. Then, 
 for all $t$, $0 < W^{(2)} (F_t) < \infty$, and
 we can define the sequence of probability measures $(\mathbb{Q}_t)_{t \geq 0}$ on $\Omega$ by 
  $$\mathbb{Q}_t(\Gamma) = \frac{W^{(2)} (\mathbf{1}_{\Gamma} F_t)}{ W^{(2)} (F_t)}$$
  for all $\Gamma \in \mathcal{F}_{\infty}$. 
Moreover, the following penalisation result holds: for all $s \geq 0$, $\Gamma_s \in \mathcal{F}_s$, 
$$\mathbb{Q}_t (\Gamma_s) \underset{t \rightarrow \infty}{\longrightarrow} 
W^{F}_{\infty} (\Gamma_s)
$$
where $W^{F}_{\infty}$ is the probability measure on $\Omega$ given by 
$$W^{F}_{\infty} := \frac{F_{\infty}}{\mathbf{W}^{(2)}(F_{\infty})} \cdot \mathbf{W}^{(2)}.$$
Moreover, restriction of $W^{F}_{\infty}$ to $\mathcal{F}_s$ has density $M^{(2)}_t(F_{\infty})/\mathbf{W}^{(2)} (F_{\infty})$ with respect to $W^{(2)}$, where $M^{(2)}_t(F_{\infty})$ is defined in Proposition \ref{MtF}. 
\end{theorem}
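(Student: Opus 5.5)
The plan is to show that $W^{(2)}(F_t) \sim \frac{\log t}{2\pi}\,\mathbf{W}^{(2)}(F_\infty)$ as $t \to \infty$, and that for every $s$ and every bounded $\mathcal{F}_s$-measurable $Y_s$,
$$W^{(2)}(Y_s F_t) \sim \frac{\log t}{2\pi}\,\mathbf{W}^{(2)}(Y_s F_\infty).$$
Taking the ratio gives the convergence $\mathbb{Q}_t(\Gamma_s) \to W^F_\infty(\Gamma_s)$. Proposition \ref{MtF}, applied with $F = F_\infty$ (integrable by condition 3), then identifies the restriction of $W^F_\infty$ to $\mathcal{F}_s$ as $M_s^{(2)}(F_\infty)/\mathbf{W}^{(2)}(F_\infty)$ times $W^{(2)}$.

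Positivity and finiteness of $W^{(2)}(F_t)$ come first. Finiteness follows from $F_t \le F_0$. For positivity, $F_t \ge F_\infty$, so it suffices to show $W^{(2)}(F_\infty \mathbf 1_{\ldots})>0$. By Proposition \ref{decompositionWWbf}, applied with $t$ replaced by $u \ge t$ and the fact that $F_\infty = F_u$ on $\{g_{C_a}\le u\}$ (condition 2, since $\sigma_a = g_{C_a}$),
$$\mathbf{W}^{(2)}(F_\infty \mathbf 1_{g_{C_a}\le u}) = \frac1\pi W^{(2)}\bigl(F_u \log^+(|X_u|/a)\bigr).$$
If $W^{(2)}(F_t)=0$, then $F_u = 0$ a.s. for all $u\ge t$, and letting $u\to\infty$ gives $\mathbf{W}^{(2)}(F_\infty)=0$, a contradiction.

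The core is the asymptotics. I decompose
$$F_t = F_t\mathbf 1_{\sigma_a\le t} + F_t \mathbf 1_{\sigma_a>t}.$$
Under $W^{(2)}$, however, $\sigma_a = \infty$ a.s. by recurrence, so the decomposition must be done instead through the identity above. Writing
$$\Phi(u) := \mathbf{W}^{(2)}(Y_s F_\infty \mathbf 1_{g_{C_a}\le u}) = \frac1\pi W^{(2)}\bigl(Y_s F_u \log^+(|X_u|/a)\bigr)$$
for $u\ge s$, $\Phi$ increases to $\mathbf{W}^{(2)}(Y_sF_\infty)$ when $Y_s\ge0$. The key heuristic is that $\log^+(|X_u|/a)$ is approximately $\tfrac12\log u$ for typical $X_u$, so $W^{(2)}(Y_sF_u)\approx \frac{2\pi}{\log u}\Phi(u)$. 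To make this rigorous I pass to Laplace transforms in $u$. By monotonicity of $F$, condition on $\mathcal{F}_v$ for an intermediate time and use the Markov property to compare $W^{(2)}(Y_sF_u\log^+(|X_u|/a))$ with $W^{(2)}(Y_sF_v\,\cdot)$ times the log-moment of a fresh Brownian increment. Concretely, for $v\le u$,
$$W^{(2)}\bigl(Y_sF_u\log^+(|X_u|/a)\bigr)\le W^{(2)}\bigl(Y_sF_v\, \mathbb E[\log^+(|X_u|/a)\mid \mathcal F_v]\bigr),$$
and $\mathbb E[\log^+|x+B_{u-v}|/a]=\tfrac12\log(u-v)+O(1+\log^+(|x|/\sqrt{u}))$. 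This gives a lower bound
$$\liminf_{t\to\infty} \frac{\log t}{2\pi}\, W^{(2)}(Y_sF_t) \ge \mathbf{W}^{(2)}(Y_sF_\infty).$$
An upper bound requires controlling the part of the mass where $F$ has not yet stabilised. For that I would take the Laplace transform $\int_0^\infty e^{-\lambda u}\,dW^{(2)}(Y_sF_u)$ and express it via the decomposition of Proposition \ref{decompositionWWbf}: an independent exponential time $e_\lambda$ falls after $g_{C_a}$ with $\mathbf{W}^{(2)}$-weight, and before it with a weight computable from the resolvent of planar Brownian motion, whose logarithmic singularity $\frac{1}{\pi}\log(1/\sqrt\lambda)$ produces the factor $\log(1/\lambda)$. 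A Tauberian theorem for monotone functions (Karamata, using that $W^{(2)}(Y_sF_t)$ is nonincreasing in $t$ when $Y_s\ge0$) converts $\lambda\to0$ asymptotics into $t\to\infty$ ones.

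The main obstacle is this upper bound. One has to show that paths still returning to the disc of radius $a$ near time $t$ contribute only $o(1/\log t)$ after weighting by $F_t$, which uses that $F$ is nonincreasing and bounded and that $\mathbf{W}^{(2)}(F_\infty)<\infty$. Finally, for general bounded $Y_s$ I split into positive and negative parts, and take $Y_s=1$ to normalise.
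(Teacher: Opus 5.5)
\textbf{There is a gap: the upper bound is missing.} You never establish
$\limsup_{t\to\infty}\frac{\log t}{2\pi}W^{(2)}(F_t)\le\mathbf{W}^{(2)}(F_\infty)$, which is the step you yourself flag as the main obstacle. Without it the ratio $\mathbb{Q}_t(\Gamma_s)$ is not controlled: lower bounds on both $W^{(2)}(\mathbf{1}_{\Gamma_s}F_t)$ and $W^{(2)}(F_t)$ say nothing about their quotient.

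The Laplace/resolvent sketch does not supply the bound as written. Under $W^{(2)}$ there is no last exit time from $C_a$ (as you note, $\sigma_a=\infty$), and $F_u$ need not stabilise. So there is no identity splitting $W^{(2)}(Y_sF_u)$ into a part ``after $g_{C_a}$ with $\mathbf{W}^{(2)}$-weight'' plus a resolvent part. The bridge you actually have is the $\log^+$-weighted identity of Proposition~\ref{decompositionWWbf}. The paper makes a Laplace route work only by splitting $F_t$ with the weights $\log^+|X_t/a|/(1+\log^+|X_t/a|)^j$, factorising through the $\ell$-integral form of that proposition, normalising by the case $F\equiv1$, and then invoking Karamata. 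Also, ``paths still returning to the disc near time $t$'' is not the right target, since under $W^{(2)}$ every path returns.

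\textbf{The missing step is easy with your own identity.} What must be shown negligible is only the event $\{|X_t|<a\,t^{(1-\epsilon)/2}\}$, and that needs no weighting by $F$. With $L=\frac{1-\epsilon}{2}\log t$, use Markov's inequality on $\{\log^+|X_t/a|\ge L\}$, your identity $W^{(2)}(F_t\log^+|X_t/a|)=\pi\Phi(t)\le\pi\mathbf{W}^{(2)}(F_\infty)$, and the Gaussian small-ball bound on the complement:
\[
W^{(2)}(F_t)\le\frac{W^{(2)}\bigl(F_t\log^+|X_t/a|\bigr)}{L}+F_0\,W^{(2)}\bigl(|X_t|<a\,t^{(1-\epsilon)/2}\bigr)\le\frac{2\pi\,\mathbf{W}^{(2)}(F_\infty)}{(1-\epsilon)\log t}+\frac{F_0a^2}{2}\,t^{-\epsilon}.
\]
You need this only for $Y_s=1$. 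Combined with your lower bound for $Y_s=\mathbf{1}_{\Gamma_s}$, it gives $\liminf_t\mathbb{Q}_t(\Gamma_s)\ge W^F_\infty(\Gamma_s)$. Applying the same inequality to $\Gamma_s^c$ gives the matching limsup, so no splitting of general $Y_s$ into positive and negative parts is needed.

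\textbf{Smaller repairs.}
\begin{itemize}
\item Your lower bound is sound once you fix parameters. Take $v=t$ and $u=t^{1+\epsilon}$. Then $\log^+(|X_t|/\sqrt u)\to0$ in $L^1(W^{(2)})$, while $\frac12\log(u-v)\sim\frac{1+\epsilon}{2}\log t$. Afterwards let $\epsilon\to0$.
\item The asymptotics in your first paragraph are upside down. Since $W^{(2)}(F_t)\le F_0$, the correct statement is $W^{(2)}(F_t)\sim\frac{2\pi}{\log t}\mathbf{W}^{(2)}(F_\infty)$, as in your later liminf.
\end{itemize}

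\textbf{Comparison with the paper.} With these repairs your route is genuinely more elementary than the paper's. It uses only the second half of Proposition~\ref{decompositionWWbf} and needs no Tauberian theorem. It also handles $Y_s$ directly, bypassing Lemma~\ref{lemmaclassC} and the conditioning-plus-Fatou transfer. What the paper's Laplace route buys is exact Laplace asymptotics for each piece of the splitting of $F_t$.
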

The next section is devoted to the proof of this theorem.

\section{Proof of the main theorem}
\label{proof}
The main ingredient of the proof consists in computing the asymptotic of the denominator in the definition of $\mathbb{Q}_t$. We get the following: 
\begin{proposition} \label{limitlogt}
Let $(F_{t})_{t \geq 0}$ be a process belonging to $\mathcal{C}$. Then
   
  \begin{equation}\frac{\log t}{2 \pi} W^{(2)} (F_t) \underset{t \rightarrow \infty}{\longrightarrow} \mathbf{W}^{(2)}(F_{\infty})
  \label{asymptoticslog}
  \end{equation}
\end{proposition}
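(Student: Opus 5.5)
We combine the last-exit-time identity of Proposition \ref{decompositionWWbf} with a Laplace transform in $t$ and a Tauberian argument that uses the monotonicity of $t\mapsto W^{(2)}(F_t)$. Fix $a$ as in the definition of $\mathcal{C}$ and write $\sigma_a = g_{C_a}$. On $\{g_{C_a}\le t\}$ we have $F_t = F_\infty$. Since $F_t$ is $\mathcal{F}_t$-measurable and bounded by $F_0$, Proposition \ref{decompositionWWbf} gives
$$\mathbf{W}^{(2)}(F_\infty \mathbf{1}_{g_{C_a}\le t}) = \mathbf{W}^{(2)}(F_t \mathbf{1}_{g_{C_a}\le t}) = \frac{1}{\pi} W^{(2)}\big(F_t \log^+(|X_t|/a)\big).$$
As $t\to\infty$ the left side increases to $\mathbf{W}^{(2)}(F_\infty)$, by monotone convergence and because $g_{C_a}<\infty$ $\mathbf{W}^{(2)}$-a.e.\ (the decomposition shows paths are eventually transient). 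Hence
$$W^{(2)}\big(F_t\log^+(|X_t|/a)\big)\to \pi\,\mathbf{W}^{(2)}(F_\infty).$$
The task is therefore to show that $W^{(2)}(F_t\log^+(|X_t|/a)) \sim \tfrac{\log t}{2}\,W^{(2)}(F_t)$. Heuristically, $\log|X_t|\approx \tfrac12\log t$ at typical scales.

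A direct pointwise comparison is not available, because $F_t$ and $|X_t|$ are correlated. We therefore pass to Laplace transforms in $t$. For $\lambda>0$ we compute
$$\int_0^\infty e^{-\lambda t}\, W^{(2)}\big(F_t \log^+(|X_t|/a)\big)\,dt.$$
We decompose at $t$ and use the Markov property backwards. The key tool is a ``last passage'' description: on $\{g_{C_a}\le t\}$, $F_t=F_{g_{C_a}}$. Applying the first part of Proposition \ref{decompositionWWbf}, the law of the path up to $g_{C_a}$ under $\mathbf{W}^{(2)}$ is $a^{-2}\int d\ell\, W^{(2,\tau_\ell^{(C_a)})}$, and after $g_{C_a}$ the path is an independent rotated $\tilde P_a^{(2,\log)}$ process. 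Thus
$$\mathbf{W}^{(2)}(F_\infty \mathbf{1}_{g_{C_a}\le t}) = a^{-2}\int_0^\infty d\ell\, W^{(2)}\big(F_{\tau_\ell}\,\mathbf{1}_{\tau_\ell\le t}\big),$$
and we take $\lambda$-Laplace transforms to obtain $\lambda^{-1}a^{-2}\int d\ell\, W^{(2)}(F_{\tau_\ell}e^{-\lambda\tau_\ell})$. Separately, we express $\int e^{-\lambda t}W^{(2)}(F_t)\,dt$ by splitting $t$ at the last $\tau_\ell$ before $t$, i.e.\ at $g_t$, the last visit to $C_a$ before $t$. Using excursion-like additivity of the inverse local time (a subordinator under $W^{(2)}$ started on $C_a$, with rotational invariance removing the angular dependence), this gives
$$\int_0^\infty e^{-\lambda t}W^{(2)}(F_t)dt \approx \int_0^\infty d\ell\, W^{(2)}(F_{\tau_\ell}e^{-\lambda\tau_\ell})\cdot \int_0^\infty e^{-\lambda u}\,\mathbb P_{C_a}(T_{C_a}>u)\,du\cdot(\text{const}),$$
up to the initial piece before the first hitting of $C_a$ and an error coming from $F$ decreasing inside excursions. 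That error is controlled using monotonicity: $F_{\tau_{\ell}}\ge F_t\ge F_{\tau_{\ell+}}$ on the relevant excursion. For the planar Brownian motion, the excursion measure gives $\int_0^\infty e^{-\lambda u}n(\zeta>u)\,du \sim c/(\lambda\log(1/\lambda))$ with an explicit $c$. This yields
$$\int_0^\infty e^{-\lambda t}W^{(2)}(F_t)\,dt\sim \frac{2\pi\,\mathbf{W}^{(2)}(F_\infty)}{\lambda\log(1/\lambda)}, \qquad \lambda\to0.$$

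Since $t\mapsto W^{(2)}(F_t)$ is nonincreasing, the Karamata Tauberian theorem with slowly varying function $1/\log$ converts this to $W^{(2)}(F_t)\sim 2\pi\mathbf{W}^{(2)}(F_\infty)/\log t$, which is \eqref{asymptoticslog}. The main obstacle is the middle step: making the excursion decomposition rigorous in dimension two with a functional $F$ that may decrease during excursions, and pinning the constant. I would first try to avoid excursion theory by using the identity of Proposition \ref{decompositionWWbf} in Laplace form together with the sandwich $F_{\tau_{L_t}}\ge F_t\ge F_\infty$ on appropriate events. I would fix the constant by checking the case $F\equiv 1$ restricted to $g_{C_a}\le t$, where $W^{(2)}(\log^+(|X_t|/a))\sim \tfrac12\log t$ is computed directly.
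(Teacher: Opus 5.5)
Your reduction and your last step are correct. Proposition~\ref{decompositionWWbf} does give $W^{(2)}(F_t\log^+(|X_t|/a))\to\pi\mathbf{W}^{(2)}(F_\infty)$. Once $\int_0^\infty e^{-\lambda t}W^{(2)}(F_t)\,dt\sim 2\pi\mathbf{W}^{(2)}(F_\infty)/(\lambda\log(1/\lambda))$ is known, Karamata's theorem for the nonincreasing function $t\mapsto W^{(2)}(F_t)$ finishes, as in the paper.

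The gap is that this Laplace asymptotic, which carries all the content, is only asserted with an ``$\approx$''. It rests on a last-exit decomposition of $W^{(2)}$ at $g_t$, using an excursion measure away from $C_a$ whose tail you never compute. The error control you sketch does not close:
\begin{itemize}
\item Bounding $F_t$ by its value at the start of the straddling excursion gives a product-form upper bound.
\item The matching lower bound needs $F$ at the end of that excursion, which is correlated with the excursion's length. Note that $\tau_{L_t}\ge t$, so $F_{\tau_{L_t}}\le F_t$, not $\ge$.
\item The Laplace transform is dominated by excursions of length at least of order $1/\lambda$. The natural telescoping bound $\sum_{\text{exc}}(F_{\text{start}}-F_{\text{end}})\le F_0$ therefore controls the error only by $O(1/\lambda)$, which is larger than the main term $1/(\lambda\log(1/\lambda))$.
\item Your fallback $F_t\ge F_\infty$ is useless under $W^{(2)}$, where $F_\infty$ typically vanishes (e.g.\ for $F_t=e^{-A_t^{(q)}/2}$, by recurrence).
\end{itemize}
Also, $\mathbb{P}_{C_a}(T_{C_a}>u)=0$; you mean $n(\zeta>u)$.

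The missing idea is to apply Proposition~\ref{decompositionWWbf} to $W^{(2)}(F_t)$ itself, not only to $W^{(2)}(F_t\log^+)$, so that no excursion theory under $W^{(2)}$ is needed. Set $\Lambda_t=\log^+|X_t/a|$ and split
\[
1=\frac{\Lambda_t}{1+\Lambda_t}+\frac{\Lambda_t}{(1+\Lambda_t)^2}+\frac{1}{(1+\Lambda_t)^2}.
\]
For $j\in\{1,2\}$, apply the second identity of Proposition~\ref{decompositionWWbf} to the bounded $\mathcal{F}_t$-measurable variable $F_t(1+\Lambda_t)^{-j}$:
\[
W^{(2)}\Big(F_t\frac{\Lambda_t}{(1+\Lambda_t)^j}\Big)=\pi\,\mathbf{W}^{(2)}\Big(F_\infty\mathbf{1}_{g_{C_a}\le t}(1+\Lambda_t)^{-j}\Big).
\]
This holds because $F_t=F_\infty$ exactly on $\{g_{C_a}\le t\}$, which removes any issue of $F$ decreasing along excursions. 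In Laplace variables the right-hand side factorises, by independence of the pre- and post-$g_{C_a}$ parts in the decomposition, as
\[
\pi\,\mathbf{W}^{(2)}(F_\infty e^{-\lambda g_{C_a}})\,\tilde P_a^{(2,\log)}\Big[\int_0^\infty e^{-\lambda u}(1+\log|X_u/a|)^{-j}du\Big].
\]
The unknown second factor is calibrated by the case $F\equiv1$. There everything is explicit through $|X_t|^2\overset{(law)}{=}t\chi_2$ and $\mathbf{W}^{(2)}(e^{-\lambda g_{C_a}})\sim(-\log\lambda)/(2\pi)$. The three terms then behave as follows:
\begin{itemize}
\item The $j=1$ term gives $2\pi\mathbf{W}^{(2)}(F_\infty)/(\lambda\log(1/\lambda))$.
\item The $j=2$ term is smaller by a factor of order $\log(1/\lambda)$.
\item The last term is bounded crudely by $F_0\,W^{(2)}((1+\Lambda_t)^{-2})$ and contributes $O(\lambda^{-1}(\log 1/\lambda)^{-2})$.
\end{itemize}
Finally, Karamata's theorem needs a positive slowly varying function, so the case $\mathbf{W}^{(2)}(F_\infty)=0$ must be treated separately. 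One way is to compare $F+G$ with $G$ for a reference $G\in\mathcal{C}$ with $\mathbf{W}^{(2)}(G_\infty)>0$.
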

\begin{proof}
If the result is proven for $\mathbf{W}^{(2)}(F_{\infty}) > 0$, then we can extend it to 
the case $\mathbf{W}^{(2)}(F_{\infty}) = 0$ by 
subtracting the limits corresponding to the functionals 
$(F_t + G_t)_{t \geq 0}$ and $(G_t)_{t \geq 0}$, where 
$(G_t)_{t \geq 0}$ is any functional in the class $\mathcal{C}$ such that  
$\mathbf{W}^{(2)}(G_{\infty}) > 0$, for example 
$G_t = e^{-A_{t}^{(q)}/2}$, with the assumptions and notation of 
Theorem \ref{definitionW2}. 
From now, we can then assume $\mathbf{W}^{(2)}(F_{\infty}) > 0$.

  We write:
$$F_t = F_{t}^{(1)} + F_{t}^{(2)} +F_{t}^{(3)},$$
where 
$$F_t^{(1)}  = F_t \frac{\log^{+}|X_{t}/a|}{1+\log^{+}|X_{t}/a|},$$
$$F_t^{(2)}  = F_t \frac{\log^{+}|X_{t}/a|}{(1+\log^{+}|X_{t}/a|)^2},$$
$$F_t^{(3)}  = F_t \frac{1}{(1+\log^{+}|X_{t}/a|)^2}.$$
For $j \in \{1,2\}$, $\lambda > 0$, we have, 
from Proposition \ref{decompositionWWbf} and the fact that $(F_t)_{t \geq 0}$ is in class $\mathcal{C}$,
which implies $F_t = F_{g_{C_a}} = F_{\sigma_a} = F_{\infty}$
for $g_{C_a} \leq t$, 
\begin{align*}
\int_0^\infty e^{-\lambda t}{W}^{(2)}(F_t^{(j)}) dt
&= 
\int_0^\infty e^{-\lambda t} \, W^{(2)}\left(\frac{\log^+ |X_t/a|}{(1 + \log^+ |X_t/a|)^j} F_t \right)\\
&=\pi \int_0^\infty e^{-\lambda t}
\mathbf{W}^{(2)}\left[
F_t \frac{1_{g_{C_a} \leq t}}{(1 + \log^+ |X_t/a|)^j}
\right]\\
&=\pi\int_0^\infty e^{-\lambda t}
\mathbf{W}^{(2)}\left[
F_{g_{C_a}} \frac{1_{g_{C_a} \leq t}}{(1 + \log^+ |X_t/a|)^j} 
\right]\\
&=\pi \mathbf{W}^{(2)}\left[ F_{g_{C_a}} e^{-\lambda g_{C_a}} \int_0^\infty \frac{e^{-\lambda u}}{(1 + \log^+|X_{g_{C_a}+u}/a|)^j} du \right] \\
&=\frac{\pi}{a^2} 
\int_{0}^{\infty} 
d \ell 
W^{(2)} ( F_{\tau_{\ell}^{(C_a)}}  e^{-\lambda \tau_{\ell}^{(C_a)}} )
 \tilde{P}_a^{(2, \log)} \left[ 
 \int_0^{\infty} \frac{e^{-\lambda u} du}{(1 +
 \log|X_{u}/a|)^j}
 \right]. 
\end{align*}
 The last line is due to the fact that 
 under 
 $W^{(2, \tau_{\ell}^{(C_a)})} \circ \tilde{P}_a^{(2, \log)}$, $\tau_{\ell}^{(C_a)}$ is the last hitting time of $C_a$, whereas
 $(|X_{g_{C_a} +u} |)_{u \geq 0}$ is independent of 
 the trajectory of $X$ up to time $\tau_{\ell}^{(C_a)}$, and follows the same law as $(|X_u|)_{u \geq 0}$ under $\tilde{P}_a^{(2, \log)}$. 
 Proposition \ref{decompositionWWbf} also implies
 
 $$ \mathbf{W}^{(2)}\left[ F_{\infty} e^{-\lambda g_{C_a}} \right] = \mathbf{W}^{(2)}\left[ F_{g_{C_a}} e^{-\lambda g_{C_a}} \right]
 = \frac{1}{a^2} 
\int_{0}^{\infty} 
d \ell 
W^{(2)} ( F_{\tau_{\ell}^{(C_a)}}  e^{-\lambda \tau_{\ell}^{(C_a)}} ).$$
Therefore,
\begin{equation}\int_0^\infty e^{-\lambda t}{W}^{(2)}(F_t^{(j)}) dt 
= \pi 
\mathbf{W}^{(2)}\left[ F_{\infty} e^{-\lambda g_{C_a}} \right] \tilde{P}_a^{(2, \log)} \left[ 
 \int_0^{\infty} \frac{e^{-\lambda u } du}{(1 +
 \log|X_{u}/a|)^j} \right].
 \label{tauberianintegral}
 \end{equation}
The computation above is also true when $F_t$ is replaced by $1$. We get 
\begin{equation}\int_0^\infty e^{-\lambda t}{W}^{(2)}
\left( \frac{\log^+ |X_t/a|}{(1 + \log^+|X_t/a|)^j}
\right) 
dt = \pi \mathbf{W}^{(2)}\left[ e^{-\lambda g_{C_a}} \right] \tilde{P}_a^{(2, \log)} \left[ 
 \int_0^{\infty} \frac{e^{-\lambda u } du}{(1 +
 \log|X_{u}/a|)^j} \right]. \label{Ft=1}
 \end{equation}
Under $W^{(2)}$, $|X_t|^2$ is $t$ times a chi-square random variable $\chi_2$ with two degrees of freedom, which implies that 
$$\log |X_t/a| = \frac{1}{2} \log t - \log a + \frac{1}{2} \log \chi_2$$
in distribution. 
We deduce 
\begin{align*}\int_0^\infty e^{-\lambda t}{W}^{(2)}
\left( \frac{\log^+ |X_t/a|}{(1 + \log^+|X_t/a|)^j}
\right) 
dt 
& = \frac{1}{\lambda} 
\mathbb{E} \left[
\frac{(( \log \mathbf{e}_{\lambda})/2 - \log a + (\log \chi_2)/2)^+} {(1 + ( (\log \mathbf{e}_{\lambda})/2 - \log a + (\log \chi_2)/2)^+)^j } \right] 
\\ & = \frac{1}{\lambda} 
\mathbb{E} \left[
\frac{(( \log \mathbf{e}_{1} - \log \lambda)/2 - \log a + (\log \chi_2)/2)^+} {(1 + ( (\log \mathbf{e}_{1} - \log \lambda)/2 - \log a + (\log \chi_2)/2)^+)^j } \right] 
\end{align*}
where $\mathbf{e}_{\lambda}$ is an exponential random variable with parameter $\lambda$, independent of $\chi_2$. 
Hence, for $\lambda \in (0,1)$, 
$$ \lambda \left(- \frac{1}{2} \log \lambda \right)^{j-1}
\int_0^\infty e^{-\lambda t}{W}^{(2)}
\left( \frac{\log^+ |X_t/a|}{(1 + \log^+|X_t/a|)^j}
\right) 
dt = \mathbb{E} \left[
\frac{( 1 + V/(-\log \lambda) )^+} {(2/(- \log \lambda) +  (1 + V/(-\log \lambda))^+)^j } \right] 
$$
for $$V := \log \mathbf{e}_1 - 2 \log a +  \log \chi_2.$$
Now, 
$$\mathbb{E} \left[ \mathbf{1}_{V \geq (\log \lambda)/2 }
\frac{( 1 + V/(-\log \lambda) )^+} {(2/(- \log \lambda) +  (1 + V/(-\log \lambda))^+)^j } \right]
\underset{\lambda \rightarrow 0}{\longrightarrow} 
1$$
by dominated convergence, since the quantity inside the expectation is bounded by $1$ for $j = 1$ and by $2$ for $j = 2$. On the other hand, 
$$\mathbb{E} \left[ \mathbf{1}_{V <  (\log \lambda)/2 }
\frac{( 1 + V/(-\log \lambda) )^+} {(2/(- \log \lambda) +  (1 + V/(-\log \lambda))^+)^j } \right]
\leq \mathbb{E} \left[ \mathbf{1}_{V <  (\log \lambda)/2 }
\frac{1/2} {(2/(- \log \lambda))^j } \right]
$$
 tends to zero, since 
 $$\mathbb{P} ( V < (\log \lambda)/2)
 = \mathbb{P} ( \mathbf{e}_1 \chi_2 < a^2 \sqrt{\lambda})
 \leq \mathbb{P} (\mathbf{e}_1 < a \lambda^{1/4})
 + \mathbb{P} (\mathbf{e}_{1/2} < a \lambda^{1/4})
 = \mathcal{O}(a \lambda^{1/4})
 $$
is negligible with respect to $(- \log \lambda)^j$ when $\lambda \rightarrow 0$. 
We deduce 
$$\lambda \left(- \frac{1}{2} \log \lambda \right)^{j-1}
\int_0^\infty e^{-\lambda t}{W}^{(2)}
\left( \frac{\log^+ |X_t/a|}{(1 + \log^+|X_t/a|)^j}
\right) 
dt \underset{\lambda \rightarrow 0}{\longrightarrow} 
1. $$
 Combining with \eqref{Ft=1}, we get
$$ \pi \mathbf{W}^{(2)}\left[ e^{-\lambda g_{C_a}} \right] \tilde{P}_a^{(2, \log)} \left[ 
 \int_0^{\infty} \frac{e^{-\lambda u } du}{(1 +
 \log|X_{u}/a|)^j} \right]
 \sim \frac{1}{\lambda ( - (\log \lambda)/2)^{j-1} }
 $$
 when $\lambda \rightarrow 0$. 
 Then, \eqref{tauberianintegral} implies 
$$
\int_0^\infty e^{-\lambda t}{W}^{(2)}(F_t^{(j)}) dt 
\sim_{\lambda \rightarrow 0}  \frac{1}{\lambda ( - (\log \lambda)/2)^{j-1} } \frac{\mathbf{W}^{(2)}\left[ F_{\infty} e^{-\lambda g_{C_a}} \right]}{\mathbf{W}^{(2)}\left[ e^{-\lambda g_{C_a}} \right]}.$$
On the other hand, 
\begin{align*}
    \mathbf{W}^{(2)}[e^{-\lambda g_{c_a}}]
&=
\lambda\mathbf{W}^{(2)}\left[\int_0^\infty 1_{g_{C_a} \leq u} e^{-\lambda u} \, du \right]\\
&=\lambda \int_0^\infty e^{-\lambda u} \mathbf{W}^{(2)}\left(g_{C_a} \le u\right) du\\
&=\frac{\lambda}{\pi} \int_0^\infty e^{-\lambda u} W^{(2)} \left(\log^{+}|X_u/a|\right) du\\
& = \frac{1}{\pi} \mathbb{E} [ ((\log \mathbf{e}_{\lambda})/2 - \log a + (\log \chi_2)/2)^+ ]
\\ & = \frac{1}{\pi} \mathbb{E} [ (V - \log \lambda)^+/2]
= \frac{-\log \lambda}{2 \pi} 
+ \mathcal{O} (\mathbb{E} [|V|])
\end{align*}
for $\lambda \in (0,1)$. Since $V$ is integrable, we deduce 
$$\int_0^\infty e^{-\lambda t}{W}^{(2)}(F_t^{(j)}) dt 
\sim_{\lambda \rightarrow 0}  \frac{\pi}{\lambda ( - (\log \lambda)/2)^{j} } \mathbf{W}^{(2)}\left[ F_{\infty} e^{-\lambda g_{C_a}} \right]$$
By monotone convergence, 
$$\mathbf{W}^{(2)}\left[ F_{\infty} e^{-\lambda g_{C_a}} \right] 
\underset{\lambda \rightarrow 0}{\longrightarrow}
\mathbf{W}^{(2)}\left[ F_{\infty}  \right].$$
Since $(F_t)_{t \geq 0}$ is in the class $\mathcal{C}$, the limit is non-zero and finite. 
Hence,  
$$\int_0^\infty e^{-\lambda t}{W}^{(2)}(F_t^{(j)}) dt 
\sim_{\lambda \rightarrow 0}  \frac{\pi}{\lambda ( - (\log \lambda)/2)^{j} } \mathbf{W}^{(2)}\left[ F_{\infty} \right]$$
for $j \in \{1,2\}$. 
For $j = 3$, we crudely bound $F_t$ by $F_0$, which is almost surely equal to a constant $C \geq 0$. 
We get 
\begin{align*}\int_0^\infty e^{-\lambda t}{W}^{(2)}(F_t^{(3)}) dt & 
\leq C \int_0^\infty e^{-\lambda t} {W}^{(2)}
\left( \frac{1}{ (1 + \log^+ |X_t/a|)^2} \right) dt 
\\ & = \frac{C}{\lambda} \mathbb{E} \left[ \left(1 +  ((\log \mathbf{e}_{\lambda})/2 - \log a + (\log \chi_2)/2)^+ \right)^{-2} \right]
\\ & = \frac{C}{\lambda} \mathbb{E} \left[ \left(1 +  (V- \log \lambda)^+/2 \right)^{-2} \right]
\\ & \leq \frac{C}{\lambda} \mathbb{P} ( V < (\log \lambda) / 2) 
+ \frac{C}{\lambda} \left(1 + (- \log \lambda)/4\right)^{-2}
= \mathcal{O} (\lambda^{-1} (-\log \lambda)^{-2}). 
\end{align*}
Adding the estimates above for $j \in \{1,2,3\}$, 
we observe that the term $j = 1$ dominates, which gives 
$$\int_0^{\infty} e^{-\lambda t} W^{(2)} (F_t) dt
\sim_{\lambda \rightarrow 0} 
\frac{2 \pi}{\lambda (- \log \lambda)} \mathbf{W}^{(2)}\left[ F_{\infty} \right]. 
$$
Now, Karamata's Tauberian theorem (see, for example, \cite{BGT87}, Theorem 1.7.1.) implies the following: 
if $U$ is a nonincreasing function from $[0, \infty)$ to $[0, \infty)$, and if 
\[
\int_0^\infty e^{-\lambda t} U(t) dt \sim_{\lambda > 0, \lambda \rightarrow 0} \lambda^{-\rho} L(1/\lambda), 
\]
for some fixed $\rho \geq 0$ and for a slowly varying positive function $L$, i.e. 
$L(ct)/L(t) \rightarrow 1$ for any fixed $c > 0$ and $t \rightarrow \infty$,  then

$$
U(t) \sim_{t \to \infty} \frac{1}{\Gamma(\rho)} t^{\rho-1} L(t).$$
Applying this result to 
$U(t) = W^{(2)} (F_t)$, $\rho = 1$ and 
$L(t) = (2 \pi /\log t) \mathbf{W}^{(2)}\left[ F_{\infty} \right]$ gives 
$$W^{(2)} (F_t) \sim_{t \rightarrow \infty} (2 \pi /\log t) \mathbf{W}^{(2)}\left[ F_{\infty} \right],$$
which completes the proof of the proposition. 
\end{proof}
From now, we use the following notations: for $s \geq 0$ and $\omega \in\mathcal{C}\big([0,s]\to\mathbb{C}\big)$, we define by $(F^{(\omega)}_t)_{t \geq 0}$ the process
given by 
$$F^{(\omega)}_t = F_{t + s} ( \omega')$$
where 
$\omega'_u = \omega_u + X_0$ for $u \leq s$ and 
$\omega'_u = \omega_s + X_{u-s}$ for $u \geq s$. 
We get the following lemma: 
\begin{lemma} \label{lemmaclassC}
If $(F_t)_{t \geq 0}$ belongs to the class $\mathcal{C}$, then $(F^{(\omega)}_t)_{t \geq 0}$ also belongs to $\mathcal{C}$, for almost all functions 
$\omega \in \mathcal{C}\big([0,s],\mathbb{C}\big) $, with respect to the Wiener measure.  
\end{lemma}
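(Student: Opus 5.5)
We verify the three conditions of the definition of $\mathcal{C}$ for $(F^{(\omega)}_t)_{t\ge 0}$, where $\omega$ is fixed outside a $W^{(2)}$-null set of paths on $[0,s]$.

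\textbf{Adaptedness, monotonicity, boundedness.} Adaptedness holds because $F^{(\omega)}_t$ depends only on $\omega$ and on $(X_u)_{u\le t}$. Indeed, $F_{t+s}$ is $\mathcal{F}_{t+s}$-measurable, and $\omega'$ restricted to $[0,t+s]$ is determined by $\omega$ and $(X_u)_{u \leq t}$. For nonnegativity and monotonicity, $t\mapsto F_{t+s}(\omega')$ is nonincreasing and nonnegative for every path $\omega'$. Strictly, these properties only hold $W^{(2)}$-almost surely. By Fubini and the Markov property, the law of $\omega'$ under $W^{(2)}$ (with $X_0=0$), averaged over $\omega$ drawn from the Wiener measure on $[0,s]$, is exactly $W^{(2)}$. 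Hence for almost every $\omega$ the a.s. properties transfer to $X$. The value $F^{(\omega)}_0=F_s(\omega)$ is a constant, bounded by $F_0$.

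\textbf{Condition 2.} Let $a$ be the radius for $F$, and choose $a' := a + \max_{u\le s}|\omega_u|$, which is finite. If $|X_u|>a'$ for all $u\ge t$, then $|\omega'_{u+s}| = |\omega_s+X_u| > a$ for all $u \ge t$. Hence $\sigma_a(\omega')\le t+s$, and so $F_{t+s}(\omega') = F_\infty(\omega')$. This gives $F^{(\omega)}_t = F^{(\omega)}_\infty$ for all $t\ge\sigma_{a'}(X)$, with $F^{(\omega)}_\infty(X)=F_\infty(\omega')$.

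\textbf{Condition 3, the main obstacle.} We need $\mathbf{W}^{(2)}(F^{(\omega)}_\infty)<\infty$ for almost every $\omega$. The natural tool is Proposition \ref{MtF} applied to $F_\infty$, which is integrable by assumption. It gives, $W^{(2)}$-a.s.,
$$M_s^{(2)}(F_\infty)=\mathbf{W}^{(2)}(G), \qquad G(X)=F_\infty(\omega'),$$
where $\omega'$ is the concatenation of the path up to time $s$ with $X_s + X_{\cdot - s}$. The convention in Proposition \ref{MtF} adds $X_t$, the endpoint, which corresponds to our $\omega_s$ since $X_0=0$. So $\mathbf{W}^{(2)}(F^{(\omega)}_\infty)=M^{(2)}_s(F_\infty)(\omega)$, and it remains to show that $M^{(2)}_s(F_\infty)<\infty$ a.s. This follows because $W^{(2)}(M_s^{(2)}(F_\infty)) = \mathbf{W}^{(2)}(F_\infty)<\infty$, taking $Y_s=1$ in Proposition \ref{MtF}.

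The delicate point is identifying the version of $M_s$ with the pathwise quantity $\mathbf{W}^{(2)}(F^{(\omega)}_\infty)$ for almost every $\omega$. This identification is exactly the content of the second statement of Proposition \ref{MtF}, combined with a Fubini argument to pass from "$W^{(2)}$-a.s." to "for almost every $\omega$ on $[0,s]$". The null sets coming from the first two conditions are then combined with this one by a countable union.
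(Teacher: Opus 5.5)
Your proposal is correct and follows essentially the same route as the paper. The first two conditions are checked directly: your radius $a+\max_{u\le s}|\omega_u|$ works, although $a+|\omega_s|$, which the paper uses, already suffices. Finiteness of $\mathbf{W}^{(2)}(F^{(\omega)}_\infty)$ then comes from Proposition \ref{MtF} with $t=s$ and $Y_s=1$, identifying $\mathbf{W}^{(2)}(F^{(\omega)}_\infty)$ with $M^{(2)}_s(F_\infty)$ for Wiener-almost every $\omega$, exactly as in the paper.
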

\begin{proof}
It is clear that $(F^{(\omega)}_t)_{t \geq 0}$ is nonnegative and nonincreasing. 
Moreover, for $t \geq 0$, $F^{(\omega)}_t$ is measurable with respect to the $\sigma$-algebra generated by $\omega_u + X_0$ for $u \leq s$ and $\omega_s + X_{u-s}$ for $s \leq u \leq t +s$, and then it is $\mathcal{F}_t$-measurable. Finally, for some $a > 0$, $(F^{(\omega)}_t)_{t \geq 0}$ remains constant from the supremum of $t$ such that $|F_{t + s} ( \omega')| \leq a$, i.e. $|\omega_s + X_{t}| \leq a$. 
Hence, $(F^{(\omega)}_t)_{t \geq 0}$ is constant after 
$\sigma_{a + |\omega_s|}$. It only remains to prove that 
$\mathbf{W}^{(2)}(F^{({\omega})}_{\infty})<\infty$
for almost all $\omega \in \mathcal{C}\big([0,s], \mathbb{C}\big) $. 
We apply Proposition \ref{MtF} to $t = s$ and $Y_s = 1$, and we get that 
$$W^{(2)}(M_s^{(2)} (F_{\infty})) 
= \mathbf{W}^{(2)} (F_{\infty}) < \infty$$
and then $W^{(2)}$-almost surely
$$\mathbf{W}^{(2)}(G) = M_s^{(2)} (F_{\infty}) < \infty.$$
where for all $\omega \in \Omega$ such that $\omega(0) = 0$, 
$$G((\omega_u)_{u \geq 0}) = F_{\infty} ((\omega'_u)_{u \geq 0})$$
where $\omega'_u = X_u$ for $0 \leq u \leq s$, and 
$\omega'_u = X_s + \omega_{u-s}$ for $u \geq s$. 
In other words,
for Wiener-almost every $\omega \in \mathcal{C}([0,s], \mathbb{C} )$, 
$$\mathbf{W}^{(2)}(G^{(\omega)}) < \infty$$
where 
$$G^{(\omega)}((\eta_u)_{u \geq 0}) = F_{\infty} ((\eta'_u)_{u \geq 0})$$
for $\eta_0 = 0$, $\eta'_u = \omega_u$, $0 \leq u \leq s$, and 
$\eta'_u = \omega_s + \eta_{u-s}$ for $u \geq s$. 
Now, we are done, since
$$G^{(\omega)} ((X_u)_{u \geq 0})
= F^{(\omega)}_{\infty}$$
as soon as $X_0 = 0$, which occurs almost everywhere under $\mathbf{W}^{(2)}$. 
\end{proof}
We can now quickly complete the proof of Theorem \ref{TheMainResult}. 
For $s \geq 0$, and for Wiener-almost every $\omega \in \mathcal{C}([0,s], \mathbb{C})$, we have, from Proposition \ref{limitlogt} 
$$\frac{\log (t-s)}{2 \pi} W^{(2)} (F^{(\omega)}_{t-s})
\underset{t \rightarrow \infty}{\longrightarrow} \mathbf{W}^{(2)} 
(F^{(\omega)}_{\infty})$$
since $(F^{(\omega)}_{u})_{u \geq 0}$ belongs to class 
$\mathcal{C}$, by Lemma \ref{lemmaclassC}. On the other hand
$$\frac{\log t}{2 \pi} W^{(2)} (F_{t})
\underset{t \rightarrow \infty}{\longrightarrow} \mathbf{W}^{(2)} 
(F_{\infty}),$$
the limit being strictly positive by assumption
on $(F_t)_{t \geq 0}$. 
Since $X_0 = 0$ almost surely under $W^{(2)}$, we have almost surely, by 
Markov property, 
$$W^{(2)} (F_t | \mathcal{F}_s) 
= W^{(2)} (F^{(\omega)}_{t-s})$$
where $\omega$ is the random function given by the canonical process up to time $s$. 
We deduce 
$$\frac{W^{(2)} (F_t | \mathcal{F}_s) }{W^{(2)} (F_t)} 
\underset{t \rightarrow \infty}{\longrightarrow}  
\frac{\mathbf{W}^{(2)} 
(F^{(\omega)}_{\infty})}{\mathbf{W}^{(2)} 
(F_{\infty})}. 
$$
Now, the random functional $G$ in the proof of Lemma \ref{lemmaclassC} coincides with the functional 
$F^{(\omega)}_{\infty}$ when $\omega$ is given 
by the canonical process up to time $s$, as soon as the functionals are applied to trajectories starting from $0$. 
We then get, almost surely, 
$$\frac{W^{(2)} (F_t | \mathcal{F}_s) }{W^{(2)} (F_t)} 
\underset{t \rightarrow \infty}{\longrightarrow}  
\frac{\mathbf{W}^{(2)} 
(G)}{\mathbf{W}^{(2)} 
(F_{\infty})} = \frac{M^{(2)}_s(F_{\infty})}{\mathbf{W}^{(2)} 
(F_{\infty})}. $$
By Fatou's lemma, for any event $\Gamma_s$ in $\mathcal{F}_s$, 
$$\underset{t \rightarrow \infty}{\lim \inf}  \, \mathbb{Q}_t (\Gamma_s) 
= \underset{t \rightarrow \infty}{\lim \inf} \, W^{(2)} 
\left( \frac{\mathbf{1}_{\Gamma_s} F_t }{W^{(2)} (F_t)}  \right) 
= \underset{t \rightarrow \infty}{\lim \inf} \, W^{(2)} 
\left( \mathbf{1}_{\Gamma_s} \frac{ W^{(2)} (F_t | \mathcal{F}_s)}{W^{(2)} (F_t)}  \right) 
\geq \frac{W^{(2)} (\mathbf{1}_{\Gamma_s}M_s^{(2)}(F_{\infty}))}{
\mathbf{W}^{(2)} 
(F_{\infty})}.
$$ 
Hence, 
\begin{equation}\underset{t \rightarrow \infty}{\lim \inf}  \, \mathbb{Q}_t (\Gamma_s)  \geq W_{\infty}^F (\Gamma_s), \label{Fatou}
\end{equation}
since 
\begin{equation}W_{\infty}^F (\Gamma_s)
= \frac{\mathbf{W}^{(2)} (F_{\infty} \mathbf{1}_{\Gamma_s} )} {\mathbf{W}^{(2)} (\Gamma_s)}
= \frac{W^{(2)} (M_s^{(2)} (F_{\infty})  \mathbf{1}_{\Gamma_s} )} {\mathbf{W}^{(2)} (F_{\infty})}
\label{densityWinftyF}
\end{equation}
by Proposition \ref{MtF} applied to $Y_s = \mathbf{1}_{\Gamma_s}$. 
Applying \eqref{Fatou} to the complement of $\Gamma_s$ 
and subtracting from $1$, we get 
\begin{equation}\underset{t \rightarrow \infty}{\lim \sup}  \, \mathbb{Q}_t (F_{\infty})  \leq W_{\infty}^F (\Gamma_s).
\end{equation}
Combining with \eqref{Fatou} gives the desired convergence. Formula \eqref{densityWinftyF} provides the density of the restriction of $W_{\infty}^F$ to $\mathcal{F}_s$ with respect to the Wiener measure. 
\section{Further direction of research}
\label{discussion}

The present work focuses on penalisation limits for two-dimensional Brownian motion. While the general penalisation framework developed by Najnudel, Roynette, and Yor \cite{NRY09} applies in considerable generality, the identification of explicit limiting measures in dimension two and higher remains largely open for diffusion processes and discrete-time Markov chains.
\\\\
The approach developed here exploits structural features specific to planar Brownian motion, most notably the explicit path decomposition at the last exit time from a bounded domain and the associated description of the $\sigma-$finite universal measure $\mathbf{W}^{(2)}$. These ingredients allow for a precise analysis of the long-time behaviour of a natural class of penalisation functionals.
\\\\
Within this framework, we restrict attention to nonnegative, adapted penalisation processes that are decreasing along trajectories and that stabilise after the Brownian path exits a bounded region. These assumptions ensure both the finiteness of the relevant normalising constants and the applicability of Laplace transform and Tauberian techniques. penalisation functionals exhibiting long-range spatial dependence or persistent time inhomogeneity fall outside the scope of the present method.
\\\\
It would be of interest to determine to what extent these structural assumptions can be relaxed, either by developing alternative decompositions of the underlying path or by identifying different universal measures governing the asymptotic behaviour. Such extensions may provide a route toward treating more general diffusion processes and discrete Markov models within a two-dimensional penalisation framework.

\bibliographystyle{alpha}    
\bibliography{penalization}
 
\end{document}